\title{Weak G\"odel's incompleteness property for some decidable versions of the calculus of relations}
\author{MOHAMED KHALED}
\affil{Department of Mathematics and its applications, Central European University, Budapest, Hungary}
\affil{Department of Mathematics, Faculty of Science, Cairo University, Giza, Egypt}
\date{}
\def\f#1#2{\mathfrak{Fr}_{#1}#2}
\def\t#1#2{\mathfrak{Tm}_{#1}#2}
\def\c{\tau,H}
\def\a#1{\mathfrak{#1}}
\def\B{\mathfrak{B}}
\def\qut#1{``#1''}
\def\G{\a{G}^{\c}}
\newtheorem{definition}{Defintion}[section]
\newtheorem{thm}{Theorem}[section]
\newtheorem{prop}{Ptoposition}[section]
\newtheorem{remark}{Remark}[section]
\begin{document}

\maketitle

\begin{abstract}Relativization is one of the central topics in the study of algebras
of relations. Some relativized relation algebras behave much nicer
than the original relation algebras. In this paper, we study the
atomicity of the finitely generated free algebras of these nice
classes of relativized relation algebras. In particular, we give an
answer for the open problem, posed by I. N\'emeti in 1985, which
asks whether the finitely generated free algebras of the class of
the weak associative relation algebras $WA$ are atomic or not.
\end{abstract}

\section{Introduction}
The first order predicate calculus has its origins in the calculus
of relations. The calculus of relations was created and developed in
the second half of the nineteenth century by De Morgan, Peirce and
Schr\"oder. The creation of this calculus was the result of the
continuous efforts searching for a \qut{good general algebra of
logic}. But these efforts took place decades before the emergence of
first order calculus. The early notation for quantifiers originates
with Peirce. L\"owenheim's original version of the
L\"owenheim-Skolem theorem, c.f. \citep{low15}, is not a theorem
about first order logic but about the calculus of relations.

In 1940, Tarski proposed an axiomatization for a large part of the
calculus of relations. In the next decades Tarski's axiomatization
led to the creation of the theory of relation algebras which was
shown to be incomplete by Lyndon's discovery of nonrepresentable
relation algebras. Alfred Tarski showed that mathematics can be
built up in the equational theory of the relation algebras, hence it
is undecidable \citep{tg}. He raised the problem \qut{how much
associativity of relation composition is needed for this result}.
Roger Maddux defined the class of weakly-associative relation
algebra, $WA$, by weakening the associativity of the relation
composition. Istv\'an N\'emeti showed that the equational theory of
$WA$ is decidable.

This class also can be seen as a class of relativized relation
algebras. Indeed, it is proved in \citep{mad82} that an algebra is a
$WA$ if and only if it is isomorphic to the concrete algebra of some
subrelations of a symmetric and reflexive relation. The notion of a
relativized algebra has been introduced in the theory of Boolean
algebras and then it was extended to algebras of logics by Leon
Henkin. Relativization in algebraic logic started as a technique for
generalizing representations of algebras of logics. Relativization
of an algebra amounts to intersecting all its elements with a fixed
set (usually a subset of the unit) and to defining the new
operations as the restrictions of the old operations on this set.
Relativized algebras were not really studied in their own right, but
as tools to obtain results for the standard algebras. At the end of
the twentieth century, Andr\'eka, van Benthem, Monk and N\'emeti
started promoting relativized algebras as structures which are
interesting independently of their classical versions, see e.g.
\citep{ca3}. Indeed, relativization in many cases turns the negative
results into positive ones. Several relativized versions of algebras
do have most of the nice properties which their standard
counterparts lack.

From the universal algebra, the free algebras of a variety play an
essential role in understanding this variety. They show, in some
sense, the structure of the different \qut{concepts} (represented by
terms) of the variety. The intrinsic structures of the finitely
generated free relativized relation algebras are very involved. Some
problems concerning these algebras are still open. For example, the
problem addressing the atomicity of these algebras has not been
solved yet. The non-atomicity of the free algebras of logics is
equivalent to weak G\"odel's incompleteness property of the
corresponding logic. See \citep[proposition 8]{prenem85} and
\citep{gyn}.

In the present paper, we study the atomicity of the free algebras of
some interesting classes of relativized relation algebras. In
particular, we give an answer for the atomicity problem of the free
algebras of the class $WA$. This problem goes back to 1985 when I.
N\'emeti posed it in his Academic Doctoral Dissertation
\citep{nem86}. In 1991, N\'emeti posed the same problem again in
\citep[Problem 38]{al}. This problem was posed again as an open
problem in 2013 in the most recent book in algebraic logic
\citep[Problem 1.3.3]{ca4}. Solving this problem, we show that the
free algebras of $WA$ generated by at least one generator are not
atomic but the $0$-generated free algebra of $WA$ is finite, hence
atomic.

One of the logics corresponding to relativized relation algebras is called
arrow logic. It is a two-dimensional modal logic and it has various
applications, e.g., in linguistics (dynamic semantics of natural
language, relational semantics of Lambek Calculus),
 and in computer science (dynamic algebra, dynamic logic). For more
about relativized relation algebras and arrow logic as modal logic
see \citep{vB91, vBe94, vB96, GKWZ03, HH02, inves, mv, miku93}.

We need to recall some notions from universal algebra. For the
definitions of these notions, one can look up any book in universal
algebra (e.g., \citep{bua}). Let $t$ be any algebraic type and let
$K$ be a class of algebras of type $t$. Let $m$ be any cardinal.
$\t{m,t}{}$ denotes the term algebra of type $t$ generated by
$m$-many free variables. $\f{m}{K}$ denotes the free algebra of the
class $K$ generated by $m$-many generators.

\section{Interesting relativized relation algebras}Let $W$ be an arbitrary set of ordered pairs. Set $Id^{[W]}=\{(r,s)\in W: r=s\}$.
Define a unary operation $\Breve{\text{ }}^{[W]}$ and a binary operation $;^{[W]}$ on $\mathcal{P}(W)$ as follows. For any $R,S\subseteq W$,
$${\Breve{R}^{[W]}}=\{(r,s)\in W:(s,r)\in R\},$$
$$R;^{[W]}S=\{(r,s)\in W:(\exists u) (r,u)\in R\& (u,s)\in S\}.$$
When no confusion is likely, we merely omit the superscript $[W]$
from the above defined objects. A relativized relation set algebra
is an algebra of the form
$$\mathfrak{A}=\langle A, \cap, \cup, \setminus, \emptyset, W, ;^{[W]},
\Breve{\text{ }}^{[W]}, Id^{[W]}\rangle,$$ where $W$ is an arbitrary
set of ordered pairs and $A\subseteq\mathcal{P}(W)$ is a family of
subsets of $W$ that is closed under the Boolean set theoretic
operations $\cap,\cup,\setminus$, closed under the relativized
operations $\Breve{\text{ }}^{[W]},;^{[W]}$ and contains the
following sets $\emptyset, W, Id^{[W]}$. Suppose that $U$ is the
smallest set such that $W\subseteq U\times U$. Then $W$ and $U$ are
called the unit and the base of $\mathfrak{A}$, respectively. The
class of relativized relation set algebras is denoted by
$FRA_{\emptyset}$. Let $\it{rl}$ denote the algebraic type of
$FRA_{\emptyset}$ where $1'$ denotes the constant $Id$ called
identity, its complement is denoted by $0'$ and is called diversity.
\begin{definition}
Let $H\subseteq\{R,S,T\}$, where $R,S$ and $T$ stand for
\qut{Reflexive}, \qut{Symmetric} and \qut{Transitive} respectively.
A relation is said to be an $H$-relation if it satisfies the
properties in $H$. The class of $H$-relativized relation set
algebras, $FRA_H$, is the subclass of $FRA_{\emptyset}$ which
contains all of those algebras whose units are $H$-relations on
their bases, $RRA_H$ denotes the class of the algebras isomorphic to
$H$-relativized representable relation set algebras.
\end{definition}
The notion of $H$-relativization with $H\subseteq\{R,S,T\}$ we use
here was suggested by M.\ Marx. We note that $RRA_{\{R,S,T\}}$ is
the class of the standard representable relation algebras. We also
note that the class of the weak associative relation algebras $WA$
coincides with the class $RRA_{\{R,S\}}$. In \citep{and91},
\citep{and94b}, \citep{and96}, \citep{kramer}, \citep{marx99},
\citep{inves}, \citep{mad82} and \citep{nem87}, it was shown that,
for arbitrary $H\subseteq\{R,S,T\}$, the class $RRA_H$ enjoys any of
the following properties if and only if $T\not\in H$: finite
axiomatizability, decidability, finite algebra property, finite base
property, weak and strong interpolation, Beth definability and super
amalgamation property.

In this paper, we concentrate only on those classes $RRA_H$, where
$H\subseteq\{S,R\}$. Our main aim is to prove that, for every
$m\in\omega$ and every $H\subseteq\{R,S\}$, the $m$-generated free
algebra of the class $RRA_H$ is atomic if and only if $m=0$ and
$H=\{S,R\}$. We concentrate on finite numbers $m$ only because it is
known that all the infinitely-generated free $RRA_H$ algebras are
atomless.
\section{Normal forms for relation type}
In this section, we give a disjunctive normal form for any term in
the signature of relation algebras. Disjunctive normal forms can
provide elegant and constructive proofs of many standard results,
c.f., \citep{anderson} and \citep{fine}.

Throughout this section, we fix $m\in\omega$. For every $n$, we
define a set $F_n^m\subseteq\t{m,rl}{}$ of normal forms of degree $n$
such that every member of $F_n^m$ contains complete
information about the normal forms of the smaller degrees. Then, we
write up each term in $\t{m,rl}{}$ as a
disjunction of normal forms of the same degree.

We need to set up some notation and definitions.
Let $n\in\omega$ and let $\tau_0,\ldots,\tau_{n}\in\t{m,rl}{}$.
Define $\prod_{i=0}^{0}\tau_i:=\tau_0$ and inductively define
$\prod_{i=0}^{k}\tau_i:=(\prod_{i=0}^{k-1}\tau_i)\cdot\tau_k$, for
every $1\leq k\leq n$. Let $Y\subseteq\t{m,rl}{}$ be a finite set of
terms. If $Y=\emptyset$, then define $\prod Y=1$. Assume that $\mid
Y\mid= n$, for some $n\geq 1$. Pick any bijection
$f:n\rightarrowtail\!\!\!\!\!\rightarrow Y$ and let $$\prod
Y:={\prod}^{(f)}Y:=\prod\limits_{i=0}^{n-1}f(i).$$ This is ambiguous,
but in this paper we deal with classes of Boolean algebras with
operators, so it doesn't matter which bijection is taken for the
above product.
\begin{definition}
Let $\emptyset\not=Y\subseteq\t{m,rl}{}$ be finite and
let $\alpha\in{{^Y}\{-1,1\}}$. Define
\begin{enumerate}
\item $CY=\{x;y:x,y\in Y\}\cup\{\Breve{x}:x\in Y\}$, the one-step closure of $Y$ by
the modal operations $;$ and $\Breve{  }$.
\item $Y^{\alpha}=\prod\{x^{\alpha}:x\in Y\}$,
where for every $x\in Y$, $x^{\alpha}=x$ if $\alpha(x)=1$ and $x^{\alpha}=-x$ if $\alpha(x)=-1$.
\end{enumerate}
\end{definition}
\begin{definition}Let $D_m=\{1^{'},x_0,\ldots,x_{m-1}\}$, where $x_0,\ldots,x_{m-1}$ are the $m$ free
variables that generate $\t{m,rl}{}$. For every $n\in\omega$, we
define the followings inductively.
\begin{enumerate}[-]
\item The normal forms of degree $0$, $F_{0}^m=\{D_m^{\beta}:\beta\in{^{D_m}\{-1,1\}}\}$.
\item The set of normal forms of degree $n+1$, $$F_{n+1}^m=\{D_m^{\beta}\cdot (CF_n^m)^{\alpha}:\beta\in{^{D_m}\{-1,1\}}\text{ and }\alpha\in{^{CF_n^m}\{-1,1\}}\}.$$
\item The set of all forms, $F^m=\bigcup_{k\in\omega} F_k^m$.
\end{enumerate}
\end{definition}
Every form in $F_0^m$ is determined by the information telling
whether it is below the identity or the diversity and whether it is
below any free variable or its complement. Every form of degree
$n+1$, $n\in\omega$, is determined by the same information plus
information telling whether this term is below (or below the
complement of) the composition of any couple of forms in $F_{n}^m$,
and whether this form is below (or below the complement of) the
converse of any form in $F_{n}^m$.

Let $K$ be the class of all Boolean algebras with operators of type
$rl$. The next theorem says that, for every $n\in\omega$, the
normal forms of degree $n$ form a partition of the unit
(inside $K$). It also indicates that every term in $\t{m,rl}{}$ can
be rewritten in the form of disjunctions of some normal forms of the
same degree.
\begin{thm}\label{andreka} Let $n\in\omega$. Then the followings are true:
\begin{enumerate}
\renewcommand{\theenumi}{(\roman{enumi})}
\item\label{and1} $K\models\sum F_n^m=1$.
\item\label{and2} For every $\tau,\sigma\in F_n^m$, if $\tau\not=\sigma$ then $K\models\tau\cdot \sigma=0$.
\item\label{and3} There exists an effective method to find, for every $\tau\in F_n^m$, a finite $S\subseteq F_{n+1}^m$ such that $K\models\tau=\sum S$.
\item\label{and4} There exists an effective method to find, for every $\tau\in\t{m,rl}{}$, an $k\in\omega$ and a finite $S_{\tau}\subseteq F^m_k$ such that $K\models\tau=\sum S_{\tau}$.
\end{enumerate}
\end{thm}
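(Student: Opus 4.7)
The plan is to handle (i)--(iv) in order: (i) and (ii) by direct Boolean bookkeeping, (iii) by induction on $n$ via a refinement lemma, and (iv) by induction on term complexity using (iii).

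For (i) and (ii), recall the standard Boolean-algebra fact that for any finite subset $Y$ of a Boolean algebra, $\sum_{\gamma\in{}^Y\{-1,1\}} Y^{\gamma}=1$ and $Y^{\gamma}\cdot Y^{\gamma'}=0$ whenever $\gamma\neq\gamma'$. Applied with $Y=D_m$ this gives the degree-$0$ case. For arbitrary $n$, factoring yields $\sum F_{n+1}^m = \bigl(\sum_{\beta}D_m^{\beta}\bigr)\bigl(\sum_{\alpha}(CF_n^m)^{\alpha}\bigr)=1$, and two distinct forms of degree $n+1$ must differ in $\beta$ or in $\alpha$, so they are disjoint.

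The heart of the argument is (iii), which I would deduce from the following \emph{refinement lemma}: for every $\sigma\in F_{n+1}^m$ and every $\tau\in F_n^m$, either $K\models\sigma\leq\tau$ or $K\models\sigma\cdot\tau=0$. Granting this, set $S:=\{\sigma\in F_{n+1}^m:K\models\sigma\leq\tau\}$; then $\tau=\tau\cdot\sum F_{n+1}^m=\sum S$ by (i). I would prove the refinement lemma by induction on $n$. The base $n=0$ is immediate because $D_m^{\beta}(CF_0^m)^{\alpha}\leq D_m^{\beta}$ and distinct $D_m^{\beta}$ are disjoint. For the inductive step, write $\sigma=D_m^{\beta}(CF_n^m)^{\alpha}$ and $\tau=D_m^{\beta'}(CF_{n-1}^m)^{\alpha'}$. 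The $D_m$-parts are resolved by Boolean algebra. What remains is to show, for each $y\in CF_{n-1}^m$, that $\sigma$ lies either below $y$ or below $-y$. Every such $y$ has the form $x_1;x_2$ or $\Breve{x_1}$ with $x_i\in F_{n-1}^m$; the inductive hypothesis (the instance of (iii) at degree $n-1$) expresses each $x_i$ as a finite sum of forms from $F_n^m$, and then additivity and normality of $;$ and $\Breve{\phantom{x}}$ in $K$ rewrite $y$ as a finite sum of elements of $CF_n^m$. Since $\sigma$ is sign-determined on every element of $CF_n^m$ through $\alpha$, $\sigma$ lies below some summand (hence below $y$) or below the complement of every summand (hence disjoint from $y$).

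Part (iv) then falls out by induction on the construction of $\tau\in\t{m,rl}{}$. Generators and constants $x_i,1',0,1$ are sums of degree-$0$ forms by inspection. For the Boolean cases $-\sigma,\,\sigma+\rho,\,\sigma\cdot\rho$, apply (iii) repeatedly to push the representations of $\sigma$ and $\rho$ up to a common degree $k$, then combine using (i), (ii). For $\sigma;\rho$ and $\Breve{\sigma}$, distribute the modal operation over the sum representations (again by additivity in $K$), obtaining a finite sum of elements of $CF_k^m$; each such element is itself a sum of $F_{k+1}^m$-forms by the argument used inside (iii). Every step of the above is effective, so the existence claims are constructive. The main obstacle is the refinement lemma: the induction interlocks delicately with the BAO-additivity of composition and converse over finite joins, and this is precisely what guarantees that the successive normal-form partitions refine each other. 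The remainder of the proof is Boolean bookkeeping.
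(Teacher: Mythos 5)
Your proposal is correct and follows essentially the same route as the paper: parts (i) and (ii) by the standard Boolean partition facts, part (iii) by refining each degree-$n$ form into the degree-$(n+1)$ forms lying below it using additivity of $;$ and $\Breve{\phantom{x}}$ over the lower-degree decompositions, and part (iv) by induction on terms. Your explicit \emph{refinement lemma} (each form of degree $n+1$ is either below or disjoint from each form of degree $n$) is just a repackaging of the paper's compatibility relation $\beta'\sim\beta$, and arguably makes the justification of $K\models\tau=\sum S_\tau$ more transparent.
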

\begin{proof}\
\begin{enumerate}
\renewcommand{\theenumi}{(\roman{enumi})}
\item Since the Boolean reduct of every member of $K$ is Boolean algebra, we have $(\forall S\subseteq\t{m,rl}{})K\models \Sigma\{S^{\beta}:\beta\in{^S{\{-1,1\}}}\}=1$.
In particular, for every $n\geq 1$, we should have
\begin{eqnarray*}
K\models\Sigma F_0^m&=&\Sigma\{D_m^{\alpha}:\alpha\in{^{D_m}{\{-1,1\}}}\}=1,\text{ and }\\
K\models \Sigma F_n^m&=&\Sigma\{D_m^{\alpha}\cdot\Sigma\{C(F_{n-1}^{\beta}):\beta\in{^{F_{n-1}^m}\{-1,1\}}\}:\alpha\in{^{D_m}\{-1,1\}}\}\\
&=& \Sigma\{D_m^{\alpha}:\alpha\in{^{D_m}\{-1,1\}}\}\\
&=& 1.
\end{eqnarray*}
\item Let $\alpha_1,\alpha_2\in{^{D_m}\{-1,1\}}$ be such that $\alpha_1\not=\alpha_2$. Then there exists $x\in D_m$ such that, without loss of generality, $\alpha_1(x)=1$ and $\alpha_2(x)=-1$. Therefore,  $K\models D_m^{\alpha_1}\leq x$, $K\models D_m^{\alpha_2}\leq -x$ and $K\models D_m^{\alpha_1}\cdot D_m^{\alpha_2}=0$. Let $n\geq 1$, and let $\tau,\sigma\in F_n^m$. Similarly, if $\tau\not=\sigma$, then without loss of generality we can assume that there exists $y\in D_m\cup CF_{n-1}^m$ such that $K\models\tau\leq y$ and $K\models \sigma\leq-y$.
Therefore, $K\models \tau\cdot\sigma=0$, as desired.
\item For every $\alpha\in{^{D_m}\{-1,1\}}$, we have
$K\models D_m^{\alpha}=\Sigma\{D_m^{\alpha}\cdot(CF_0^m)^{\beta}:\beta\in{^{F_0^m}\{-1,1\}}\}$.
Inductively, let $n\geq 1$ and assume that for every $\sigma\in F_{n-1}^m$ there exists $S_{\sigma}\subseteq F_n^m$ such that $K\models \sigma=\Sigma S_{\sigma}$. For every $\sigma_1,\sigma_2\in F_{n-1}^m$, define $S_{\sigma_1;\sigma_2}=\{\gamma_1;\gamma_2:\gamma_1\in S_{\sigma_1}\text{ and }\gamma_2\in S_{\sigma_2}\}$ and $S_{\Breve{\sigma}_1}=\{\breve{\gamma}:\gamma\in S_{\sigma_1}\}$. Let $\tau=D_m^{\alpha}\cdot(CF_{n-1}^m)^{\beta}\in F_n^m$. For every $\beta^{'}\in{^{CF_n^m}\{-1,1\}}$, we say that $\beta^{'}$ is compatible with $\beta$, in symbols $\beta^{'}\sim\beta$, if for every $\sigma\in CF_{n-1}^m$,
$$\beta(\sigma)=1\iff (\exists \gamma\in S_{\sigma}) \beta^{'}(\gamma)=1.$$
Let $S_{\tau}=\{D_m^{\alpha}\cdot(CF_n^m)^{\beta^{'}}:\beta^{'}\in{^{CF_n^m}\{-1,1\}},\beta^{'}\sim\beta\}\subseteq F_{n+1}^m$. Recall that $K$ is a class of Boolean algebras with operators, therefore $K\models \tau=\Sigma S_{\tau}$.
\item By induction on terms. For every $\tau\in D_m$, we have
$$K\models\tau=\Sigma\{D_m^{\alpha}:\alpha\in{^{D_m}\{-1,1\}},\alpha(\tau)=1\}.$$
Let $\sigma_1,\sigma_2\in\t{m,rl}{}$ be such that there is an effective method to find $n_1,n_2$ and finite $S_1\subseteq F_{n_1}^m$ and $S_2\subseteq F_{n_2}^m$ such that $K\models \sigma_1=\Sigma S_1$ and $K\models \sigma_2=\Sigma S_2$. By item \ref{and3} we may assume that $n_1=n_2=:n$.
\begin{enumerate}[-]
\item If $\tau=\sigma_1+\sigma_2$ then
$K\models\tau=\Sigma (S_1\cup S_2)$.
\item If $\tau=\sigma_1\cdot\sigma_2$ then $K\models\tau=\Sigma\{x\cdot y:x\in S_1,y\in S_2\}$. By item \ref{and2}, it is clear that $\{x\cdot y:x\in S_1,y\in S_2\}\subseteq F_n^m$.
\item If $\tau=-\sigma_1$, then
$K\models\tau=\Sigma (F_n^m\setminus S_1)$.
\item If $\tau=\sigma_1;\sigma_2$ then, for every $w\in S:=\{y;z:y\in S_1, z\in S_2\}$, let $$S_w=\{D_m^{\alpha}\cdot(CF_{n}^m)^{\beta}:\alpha\in{^{D_m}\{-1,1\}},\beta\in{^{F_n^m}\{-1,1\}},\beta(w)=1\}.$$ Therefore, $K\models \tau=\Sigma\bigcup \{S_w:w\in S\}$.
\item If $\tau=\breve{\sigma}_1$ then, for every $w\in S:=\{\breve{y}:y\in S_1\}$, let $$S_w=\{D_m^{\alpha}\cdot(CF_{n}^m)^{\beta}:\alpha\in{^{D_m}\{-1,1\}},\beta\in{^{F_n^m}\{-1,1\}},\beta(w)=1\}.$$ Therefore, $K\models \tau=\Sigma\bigcup \{S_w:w\in S\}$.
\end{enumerate}
\end{enumerate}
\end{proof}

Back to the relativized relation algebras. Let $k\in\omega$. Theorem \ref{andreka}
(\ref{and1}, \ref{and2}) can also be used to label the elements of
the unit of any relativized relation algebra with normal forms from
$F^m_k$. Let $\mathfrak{A}\in RRA_{\emptyset}$ and $ev$ be some
evaluation of $x_0,\ldots,x_{m-1}$ into $A$. Suppose that $V$ is the
unit of $\mathfrak{A}$. Let $(r,s)\in V$, define
$D^{\mathfrak{A},\iota}_k(r,s):=\tau$, where $\tau$ is the unique
form in $F_k^{m}$ such that $(\mathfrak{A},\iota,(r,s))\models\tau$,
where, for every term $\sigma$, we write
$(\mathfrak{A},ev,(r,s))\models\sigma$ if and only if
$(r,s)\in[\sigma]^{\mathfrak{A}}_{v}$.
\begin{remark}\label{fayroz}
Suppose that $k\geq1$. Let $(r,s)\in V$ and suppose that $U$ is the base of $\a{A}$. Note that, in order to determine the normal form in $F^m_k$ that $(r,s)$ satisfies in $(\mathfrak{A},\iota)$, it is
necessary and sufficient to determine the terms from $F_{k-1}^m$ which the neighbors of $(r,s)$ satisfy, where the neighbors of $(r,s)$ is defined as
\begin{eqnarray*}
nghbr(r,s)=&&\{(r,s),(s,r), (r,r), (s,s)\}\cap V\\
&\cup&\{(x,w) :
(w,y)\in V, w\in U, \{ x,y\}\subseteq\{ r,s\}\}\\
&\cup&\{(w,y) :
(x,w)\in V, w\in U, \{ x,y\}\subseteq\{ r,s\}\}.
\end{eqnarray*}
This is so by the definition of normal forms. Indeed, every normal form was determined by the information on the forms of the first smaller degree.
\end{remark} The following definition focuses on how to obtain the information that the normal forms carry from their syntactical construction.
\begin{definition}
Define $color_m:F^m\rightarrow F^m$ as follows. For every $k\in\omega$, every $\beta\in{^{D_m}\{-1,1\}}$ and every $\alpha\in{^{CF_k}\{-1,1\}}$, define $$color_m(D_m^{\beta}\cdot(CF_k)^{\alpha})=color_m(D_m^{\beta})=\{y\in D_m:\beta(y)=1\}.$$
\end{definition}
For every term $\tau\in F^m$, $\tau$ is said to be white if $1^{'}\in color_m(\tau)$ and is said to be black otherwise.
\begin{definition}
Define $sub_m:F^m\rightarrow \mathcal{P}({^2{F_k^m}})$ as
follows:
\begin{enumerate}
\item For every $\tau\in F_0^m$, $sub_m(\tau)=\emptyset$.
\item Let $k\in\omega$, $\beta\in{^{D_m}\{-1,1\}}$, $\alpha\in{^{CF_k^m}\{-1,1\}}$ and $\tau=D_m^{\beta}\cdot (CF_k^m)^{\alpha}\in F_{k+1}^m$. Define $sub_m(\tau)=\{(\sigma_1,\sigma_2)\in {^2{F_k^m}}:\beta(\sigma_1;\sigma_2)=1\}$.
\end{enumerate}
\end{definition}
\begin{definition}
Define the following partial functions $f_m,R_m,L_m:F^m\rightarrow F^m$ as follows:
\begin{enumerate}
\item For every $\tau\in F_0^m$, $\tau\not\in dom(f_m)\cup dom(R_m)\cup dom(L_m)$.
\item Let $k\in\omega$, $\beta\in{^{D_m}\{-1,1\}}$, $\alpha\in{^{CF_k^m}\{-1,1\}}$ and $\tau=D_m^{\beta}\cdot (CF_k^m)^{\alpha}\in F_{k+1}^m$.
\begin{enumerate}
\item If there exists a unique $\sigma\in F_k^m$ such that $\alpha(\Breve{\sigma})=1$ then $\tau\in dom(f_m)$ and $f_m(\tau)=\sigma$. Otherwise, $\tau\not\in dom(f_m)$.
\item If there exists a unique white $\lambda\in F_k^m$ such that $\alpha(\sigma;\lambda)=1$ for some $\sigma\in F_k^m$ then $\tau\in dom(R_m)$ and $R_m(\tau)=\lambda$. Otherwise, $\tau\not\in dom(R_m)$.
\item If there exists a unique white $\lambda\in F_k^m$ such that $\alpha(\lambda;\sigma)=1$ for some $\sigma\in F_k^m$ then $\tau\in dom(L_m)$ and $L_m(\tau)=\lambda$. Otherwise, $\tau\not\in dom(L_m)$.
\end{enumerate}
\end{enumerate}
\end{definition}

\section{The atomicity of the free relativized relation algebras}
Throughout this section, let $m\in\omega$ and
$H\subseteq\{R,S\}$ be arbitrary but fixed. Recall that
we are searching for the atoms in $\f{m}{RRA_H}$. By theorem \ref{andreka} (\ref{and4}), it is
enough to search for the atoms among the normal forms. Our main aim is to prove
the following.
\begin{thm}\label{didit}
$\f{m}{RRA_H}$ is atomic if and only if $m=0$ and
$H=\{R,S\}$.
\end{thm}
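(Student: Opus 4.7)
The plan is to reduce atomicity of $\f{m}{RRA_H}$ to a combinatorial question about the normal forms. By theorem \ref{andreka}, every non-zero element of $\f{m}{RRA_H}$ is a finite sum of pairwise disjoint non-zero forms of some common degree, and each $\tau \in F^m_n$ splits into refinements in $F^m_{n+1}$. Hence a non-zero $\tau$ is an atom of $\f{m}{RRA_H}$ if and only if, from some degree onward, only one refinement of $\tau$ remains non-zero in $\f{m}{RRA_H}$. So atomicity amounts to showing this stabilization property holds for every non-zero form, and non-atomicity to exhibiting an infinite strictly descending chain $\tau_0 > \tau_1 > \tau_2 > \cdots$ of non-zero refinements witnessing the absence of stabilization.

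\textbf{The atomic case ($m=0$, $H=\{R,S\}$).} Here the plan is to prove that $\f{0}{RRA_{\{R,S\}}}$ is finite. With no generators, $D_0=\{1'\}$ and so $|F^0_0|=2$. Reflexivity and symmetry of the unit together force, by direct calculation, the identities $\breve{1'}=1'$, $\breve{0'}=0'$, and $1';x=x;1'=x$ for $x\in\{1',0'\}$, so only the iterated compositions of $0'$ contribute genuinely new information. One then verifies inductively that only boundedly many normal forms of each degree are consistent in $RRA_{\{R,S\}}$ and that the refinement map stabilizes after finitely many steps; this makes $\f{0}{RRA_{\{R,S\}}}$ a finite algebra, and hence atomic.

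\textbf{The non-atomic cases.} In every remaining case, i.e.\ whenever $m\geq 1$ or $H\subsetneq\{R,S\}$, the plan is to construct, by induction on $n$, an infinite descending chain $\tau_0>\tau_1>\cdots$ of non-zero forms in $F^m_n$, each possessing at least two non-zero refinements in $F^m_{n+1}$. Non-zeroness of $\tau_n$ is witnessed by an algebra $\a{A}_n\in RRA_H$ with evaluation $\iota_n$ satisfying $[\tau_n]^{\a{A}_n}\neq\emptyset$. These algebras are built as step-by-step networks whose points and arrows are labelled using $color_m$, $sub_m$, $f_m$, $R_m$, $L_m$ and whose local coherence is dictated by the neighbour description of remark \ref{fayroz}. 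The source of freedom that produces the splits differs by case: if $R\notin H$ one can selectively omit reflexive loops to bifurcate $color_m$-data; if $S\notin H$ one can assign distinct colours to the two directions of the same edge; and if $m\geq 1$ the $m$ free generators provide enough independent labels to separate inequivalent extensions at each stage.

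\textbf{Main obstacle.} The technical heart of the argument is the step-by-step construction: at every stage $n$ one must simultaneously (a) maintain a coherent partial representation in $RRA_H$, (b) ensure the intended normal form $\tau_n$ is realised at a distinguished pair, and (c) verify that at least two inequivalent coherent extensions realise distinct refinements $\tau_{n+1},\tau'_{n+1}\in F^m_{n+1}$. Point (c) is what separates the non-atomic cases from the atomic one, and it is here that the hypothesis $T\notin H$ is essential: without transitivity the construction need only verify local compatibility of composition data, so long composition triangles do not have to close globally, and this local slack feeds directly into the existence of multiple extensions. Showing that the splits so produced are not collapsed by the equational theory of $RRA_H$ — equivalently, that the witnessing algebras really do live in $RRA_H$ — is the main obstacle and will require a careful bookkeeping with the normal form functions defined in section 3.
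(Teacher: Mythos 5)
Your reduction of atomicity to the refinement tree of normal forms, your treatment of the atomic case via finiteness of $\f{0}{RRA_{\{R,S\}}}$, and your identification of the three sources of splitting (omitting loops when $R\notin H$, recolouring the two directions of an edge when $S\notin H$, using the generators when $m\geq 1$) all match the paper. But the non-atomic direction as you state it has a genuine logical gap: exhibiting \emph{one} infinite descending chain $\tau_0>\tau_1>\cdots$ of non-zero forms, each with at least two non-zero refinements, only shows that each $\tau_n$ fails to be an atom; it does not show that the algebra is non-atomic. An atom could sit below $\tau_0$ on a branch of the refinement tree that your chain never visits. The finite--cofinite algebra of subsets of $\omega$ illustrates the failure: the cofinite sets $\omega\setminus n$ form a strictly descending chain in which every element splits into two non-zero parts, yet that algebra is atomic. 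Since every atom of $\f{m}{RRA_H}$ is (equal in the free algebra to) a normal form, what you actually need is a fixed non-zero element below which \emph{every} non-zero normal form, of every degree, splits into two disjoint non-zero parts.

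This is precisely how the paper argues, and the direction of the construction is the opposite of yours. The paper fixes $t=0'\cdot\breve{0'}\cdot\bigl((0'\cdot\breve{0'});(0'\cdot\breve{0'})\bigr)$ and takes an \emph{arbitrary} satisfiable $\tau\in F^m_q$ with $0\neq\tau\leq t$. It builds a single labelled graph model $\a{G}^{\tau}$ realising $\tau$, selects inside it a zigzag of ``useful'' edges $e_q,\dots,e_1$ of strictly decreasing depth ending in a ``side'' edge $e_0$ of depth $0$, perturbs the model only at $e_0$ (adding a node, a converse edge, or a loop according to which of $m\geq 1$, $S\notin H$, $R\notin H$ holds), and then propagates the disagreement \emph{upward}: the degree-$(j+1)$ forms satisfied by $e_j$ in the two models are shown by induction to be disjoint, until at $j=q$ one obtains two disjoint non-zero refinements of $\tau$ itself. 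Your chain grows downward from a single chosen $\tau_0$ and so can never certify anything about the other branches. To repair your argument you would either have to prove the much stronger statement that every non-zero descendant of $\tau_0$ splits (which forces you to handle arbitrary forms anyway), or adopt the paper's top-down perturbation-and-propagation scheme; the bookkeeping with $sub_m$, $f_m$, $L_m$, $R_m$ and the consistency conditions that you flag as the ``main obstacle'' is exactly what makes that propagation work.
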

We prove some propositions considering some special cases of the
above theorem. The next proposition proves one direction of theorem
\ref{didit}.
\begin{prop}
The free algebra $\f{0}{RRA_{\{R,S\}}}$ is finite, hence atomic.
\end{prop}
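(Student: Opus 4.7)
My plan is to invoke Theorem~\ref{andreka}(\ref{and4}): since every term in $\mathfrak{Tm}_{0,rl}$ is equivalent (in every BAO of type $rl$, and hence in every $RRA_{\{R,S\}}$-algebra) to a finite disjunction of normal forms from some $F^0_k$, it suffices to show that, modulo the equational theory of $RRA_{\{R,S\}}$, the set $\bigcup_{k\in\omega} F_k^0$ contains only finitely many non-zero equivalence classes. My strategy is to analyze the admissible normal forms level by level and exhibit a stabilization after finitely many levels.

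At level $k=0$, since $m=0$, $D_0 = \{1'\}$ and $F_0^0 = \{1', 0'\}$ has two forms. For level $k=1$, I will use the identities $\Breve{1'} = 1'$, $\Breve{0'} = 0'$, $1';1' = 1'$, and $1';0' = 0';1' = 0'$ (all valid in $RRA_{\{R,S\}}$ because the unit is reflexive-symmetric and $1'$ is the composition-identity) to collapse the syntactic set $CF_0^0$. The only genuinely new element produced is $0';0'$, so the non-zero elements of $F_1^0$ in $RRA_{\{R,S\}}$ reduce to at most the four Boolean atoms of $\langle 1', 0', 0';0'\rangle$, namely
\[
\tau_1 = 1' \cdot (0';0'),\quad \tau_2 = 1' \cdot -(0';0'),\quad \tau_3 = 0' \cdot (0';0'),\quad \tau_4 = 0' \cdot -(0';0').
\]

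The main step is an induction on $k \geq 1$ proving that the non-zero classes in $F_k^0$ remain $\{\tau_1,\tau_2,\tau_3,\tau_4\}$. By Theorem~\ref{andreka}(\ref{and3}), this reduces to verifying that every composition $\tau_i;\tau_j$ and every converse $\Breve{\tau_i}$ is already equivalent to a Boolean combination of the $\tau_k$'s modulo $RRA_{\{R,S\}}$. The converses are easy since $1'$, $0'$, and $0';0'$ are all symmetric sets, so $\Breve{\tau_i} = \tau_i$. For compositions, the cases $\tau_i;\tau_j$ where at least one of $i,j$ lies in $\{1,2\}$ are tractable: since $\tau_1,\tau_2 \subseteq 1'$ with $\tau_1+\tau_2 = 1'$ and $\tau_1 \cdot \tau_2 = 0$, these factors act as domain- or codomain-restrictions on the other factor. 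The main obstacle will be the four compositions $\tau_i;\tau_j$ with $i,j \in \{3,4\}$, which compose two off-diagonal classes. Here I expect to exploit the reflexive-symmetric structure of the unit together with the weak-associativity axioms of $WA = RRA_{\{R,S\}}$ to show that each such product reduces to $0$ or to a disjunction of the $\tau_k$'s in the theory. Once the stabilization is established, $\mathfrak{Fr}_0 RRA_{\{R,S\}}$ embeds into the $16$-element Boolean algebra generated by $\{\tau_1, \tau_2, \tau_3, \tau_4\}$, giving finiteness (and hence atomicity).
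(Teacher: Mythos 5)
Your proposal is correct and takes essentially the same route as the paper: the paper works with exactly your four elements (named $e_1=\tau_2$, $e_2=\tau_1$, $m_2=\tau_4$, $m_3=\tau_3$), observes that each is fixed by converse, records the full composition table (every product is $0$, a single element of the set, or $e_2+m_3$), and then invokes Theorem~\ref{andreka}(\ref{and4}) to conclude that the free algebra is $+$-generated by these four elements and hence has at most $2^4$ elements. The compositions you defer to a case analysis are precisely the entries of that table, so only routine verification remains.
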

\begin{proof}
Let $H=\{R,S\}$ and let $Y=\{e_1,e_2,m_2,m_3\}$, where
\begin{eqnarray*}
e_{1}= 1^{'}\cdot -(0^{'};0^{'}), &\text{ }& e_{2}= 1^{'}\cdot (0^{'};0^{'}),\\
m_{2}= 0^{'}\cdot -(0^{'};0^{'}), &\text{ }& m_{3}= 0^{'}\cdot (0^{'};0^{'}).
\end{eqnarray*}
Clearly, for every $\tau\in F_1^0$ there exists $\sigma\in Y$ such that $\f{0}{RRA_H}\models \tau=\sigma$. It is easy to check the following.
$$\Breve{e_1}=e_1, \Breve{e_2}=e_2, \Breve{m_2}=m_2\text{ and }\Breve{m_3}=m_3.$$
Also, in $\f{0}{RRA_H}$ we have
\begin{center}
\begin{tabular}{ | c | c c c c | }
\hline
\text{ } $;$ \text{ } & \text{ } $e_1$ \text{ } & \text{ } $e_2$ \text{ } & \text{ } $m_2$ \text{ } & \text{ } $m_3$ \text{ } \\
\hline
\text{ } $e_1$ \text{ } & $e_1$ & $0$ & $0$ & $0$ \\
$e_2$ & $0$ & $e_2$ & $m_2$ & $m_3$ \\
$m_2$ & $0$ & $m_2$ & $0$ & $0$\\
$m_3$ & $0$ & $m_3$ & $0$ & \text{ } $e_2+m_3$ \text{ } \\
 \hline
\end{tabular}
\end{center}
Therefore, for every $\tau\in F^0$ there exists $\sigma\in Y$ such
that $\f{0}{RRA_H}\models\tau=\sigma$. Hence, theorem \ref{andreka},
\ref{and4}, implies that $\f{0}{RRA_H}$ is $+$ generated by
$\{e_1,e_2,m_2,m_3\}$. Hence $\f{0}{RRA_H}$ is finite, and
consequently atomic. In fact, $Y$ is the set of atoms of
$\f{0}{RRA_H}$, thus the above give a complete description of this free algebra.
\end{proof}

To prove the other direction of the theorem, suppose that $H\not=\{R,S\}$ or $m>0$. Let $t=0^{'}\cdot\breve{0^{'}}\cdot((0^{'}\cdot\breve{0^{'}});(0^{'}\cdot\breve{0^{'}}))$, we show that there is no atom below $t$ in $\f{m}{RRA_H}$. To this end, fix a finite number $q\geq 1$ and a satisfiable normal form $\tau\in F^m_q$ such that $\f{m}{RRA_H}\models0\not=\tau\leq t$. Our strategy goes through the following steps.
\begin{description}
\item[Step 1] Construct an algebra $\a{G}^{\c}\in RRA_H$ and an evaluation $\iota^{\c}$.
\item[Step 2] Prove that $(\G,\iota^{\c})$ witnesses the satisfiability of the form $\tau$.
\item[Step 3] Select a special sequence of edges $e_q,\ldots,e_0\in E^{\c}$ such that $l^{\c}(e_q)=\tau$.
\item[Step 4] Use this sequence to extend $(\G,\iota^{\c})$ to $(\G_+,\iota^{\c}_+)$.
\item[Step 5] Prove that every element of the above sequence satisfies a term in $(\G,\iota^{\c})$ and another one in $(\G_+,\iota^{\c}_+)$ such that both of them are disjoint and each of which is below its labels.
\end{description}
Hence the labels of the sequence selected in Step 2 are not atoms in
$\f{m}{RRA_H}$. Therefore, $\tau$ is not an atom in $\f{m}{RRA_H}$.

\paragraph{Step 1:} We first construct a graph $G^{\tau}$ that
acts as the unit of $\a{G}^{\tau}$. The construction of $G^{\tau}$
goes through countably many rounds. At round $n\in\omega$, we
construct a graph $G_n=(V_n,E_n)$ with a labeling function
$l_n:E_n\rightarrow\bigcup\{F^m_j:0\leq j\leq q\}$ and with a depth
function $d_n:E_n\cup Id_{V_n}\rightarrow\{0,\ldots,q\}$. To achieve
our purpose, we require our constructed graphs to obey the following
\qut{consistency} conditions. For every $n\in\omega$ and every
$u,v,w\in V_n$ such that $(u,v)\in E_n$ and $d_n(u,v)=k$ for some
$k\leq q$, we have the followings.
\begin{enumerate}
\renewcommand{\theenumi}{(\arabic{enumi})}
\item\label{c0} $E_n$ is an $H$-relation on $V$, $V_n\subseteq V_{n+1}$, $E_n\subseteq E_{n+1}$, $l_n\subseteq l_{n+1}$ and $d_n\subseteq d_{n+1}$.
\item\label{c1} $l_n(u,v)\in F^m_k$ and $1^{'}\in color_m(l_n(u,v))$ if and only if $u=v$.
\item\label{c2} If $(v,u)\in E_n$ then $d_n(v,u)\in\{k-1,k,k+1\}$.
\item\label{c3} $d_n(u,u)\in\{k-1,k,k+1\}$ and $d_n(v,v)\in\{k-1,k,k+1\}$.
\item\label{c4} If $\{(u,w),(w,v)\}\subseteq E_n$, then $\{d_n(u,w),d_n(w,v)\}\subseteq\{k-1,k,k+1\}$.
\item\label{c5} Suppose that $k\geq 1$, $(v,u)\in E_n$ and $d_n(v,u)=k-1$. Then either $d_n(u,u)=k$ and $d_n(v,v)=k-1$, or,  $d_n(u,u)=k-1$ and $d_n(v,v)=k$.
\item\label{c6} Suppose that $(v,u)\not\in E_n$ or $(v,u)\in E_n$ but $d_n(v,u)=k$. Then either $d_n(u,u)\not=k+1$ or $d_n(v,v)\not=k+1$.
\item\label{c7} If $\{(u,w),(w,v)\}\subseteq E_n$, then $\f{m}{WA}\models l_n(u,v)\cdot(l_n(u,w);l_n(w,v))\not=0$.
\item\label{c8} $RRA_H\models J(u,v):=\zeta(u,v)\cdot\zeta(u,u)\cdot\zeta(v,v)\not=0$. Where $\zeta(u,v)$, $\zeta(u,u)$ and $\zeta(v,v)$ are given as follows. If $(v,u)\in E_n$ then $\zeta(u,v)=l_n(u,v)\cdot l_n(v,u)\breve{ }$, otherwise $\zeta(u,v)=l_n(u,v)\cdot-\breve{1}$. If $(u,u)\in E_n$ then $\zeta(u,u)=l_n(u,u);l_n(u,v)$, otherwise $\zeta(u,u)=-(1^{'};l_n(u,v))$. If $(v,v)\in E_n$ then $\zeta(v,v)=l_n(u,v);l_n(v,v)$, otherwise $\zeta(v,v)=-(l_n(u,v);1^{'})$.
\end{enumerate}
Conditions (\ref{c1}), (\ref{c2}), (\ref{c3}) and (\ref{c4}) are used to show that the edges carry labels that don't interrupt the desired consistency in the sense of remark \ref{fayroz}. We construct the graph $G^{\tau}$ inductively, conditions (\ref{c5}) and (\ref{c6}) are used for the induction step together with conditions (\ref{c7}) and (\ref{c8}) which allow us to give labels of the degrees we need in a consistent way with our purpose as we shall see.

For constructing $G_0$, pick two different nodes $u,v$. Define,
$V_0=\{u,v\}$ and $E_0=\{(u,v),(v,u)\}\cup\{(u,u):\tau\in
dom(L_m)\}\cup\{(v,v):\tau\in dom(R_m)\}$. Define, $d_0(u,v)=d_0^{\c}(u,u)=q$ and $d_0(v,u)=d_0(v,v)=q-1$. Remember that $\tau$ is satisfiable form, then there exists an algebra $\B\in RRA_H$, an evaluation $\iota$ and $(r,s)$ in the unit of $\B$ such that $(\B,\iota,(r,s))\models \tau$. Define,
$l_0^{\c}(u,v)=\tau$, $l_0^{\c}(v,u)=f_m(\tau)$,
$l_0^{\c}(u,u)=D_q^{\B,\iota}(r,s)$ only if $\tau\in dom(L_m)$ and
$l_0^{\c}(v,v)=R_m(\tau)$ only if $\tau\in dom(R_m)$. We note
that $f_m(\tau)$ exists because $RRA_H\models 0\ne\tau\le t$.

We have to check that $G_0$ satisfies the consistency conditions.
Here, only conditions (\ref{c7}), (\ref{c8}) need a little thought:
they are satisfied because $\tau$ is a nonzero normal form and the
labels are given by the algebra $\B$ and the evaluation $\iota$. We
need to extend our piece of $G^{\c}$ in a way that guarantees that
$(u,v)$ satisfies $\tau$ at the end of the construction. Whence, we
need to add decompositions for all the edges according to the
information given by $sub_m$ of their labels. We don't care about
any other information because our strategy goes as follows.
Simultaneously with constructing a new edge, we add its converse and
its loops according to the information given by $f_m(\tau)$,
$L_m(\tau)$ and $R_m(\tau)$.

More generally, let $n\in\omega$ and suppose that
$G_n=(V_n,E_n),l_n,d_n$ have been constructed. In the round $n+1$,
let $W_n=\{e\in E_{n}:d_n(e)\geq 1\text{, and, }e\not\in
E_{n-1}\text{ if }n\geq 1\}$, i.e, the set of the edges that we have
to consider the information given by the black couples in their
$sub_m's$. Let $Y_n=\{(a,b)\in W_n:(b,a)\not\in E_n\text{, or,
}(b,a)\in E_n\text{ and }d_n(a,b)\geq d_n(b,a)\}$. In fact, for
every edge $(a,b)\in W_n\setminus Y_n$, we don't need to consider
the black forms $(\sigma_1,\sigma_2)\in sub_m(l_n(a,b))\cap
{^2dom(f_m)}$. Indeed, $(b,a)\in Y_n$ and we add edges according to
the information given by $sub_m(l_n(b,a))$. The converses of some of
these newly added edges are also added and they are enough to carry
the information given by $sub_m(l_n(a,b))$. Let $U$ be an infinite
set disjoint from $V_n$. For every $e\in Y_n$, create an injective
function
$$g_e:\{(\sigma_1,\sigma_2)\in sub_m(l_n(e)):\sigma_1,\sigma_2\text{ are both black}\}\rightarrow U,$$
and for every $e\in W_n\setminus Y_n$, create an injective function
$$g_e:\{(\sigma_1,\sigma_2)\in sub_m(l_n(e))\setminus{^2dom(f_m)}:\sigma_1,\sigma_2\text{ are both black}\}\rightarrow U,$$
such that the ranges of the functions $g_e$'s are pairwise disjoint. Let $e=(a,b)\in W_n$ and let $w\in Rng(g_e)$. Then there exists a pair of black forms $(\sigma_1,\sigma_2)\in sub_m(l_n(e))\cap
dom(g_e)$ such that $g_e((\sigma_1,\sigma_2))=w$. Remember that
$G_n$ satisfies the consistency condition (\ref{c7}). Therefore, there
exists an algebra $\B\in RRA_H$, an evaluation $\iota$ and $(r,s)$
in the unit of $\B$ such that $(\B,\iota,(r,s))\models l_n^{\c}(e)$.
Since both $\sigma_1,\sigma_2$ are black, then there exists $p$ in
the base of $\B$ different from $r$ and $s$ such that
$(r,p),(p,s)$ are in the unit of $\B$, $(\B,\iota,(r,p))\models\sigma_1$ and
$(\B,\iota,(p,s))\models\sigma_2$. If $S\in H$, then $(p,r),(s,p)$
are both in the unit of $\B$. If $R\in H$, then $(p,p)$ is in the
unit of $\B$. If $\sigma_1\in dom(f_m)$, then $(p,r)$ is in the unit
of $\B$ and $(\B,\iota,(p,r))\models f_m(\sigma_1)$. If $\sigma_2\in
dom(f_m)$, then $(s,p)$ is in the unit of $\B$ and
$(\B,\iota,(s,p))\models f_m(\sigma_2)$. If $\sigma_1\in dom(R_m)$
(hence $\sigma_2\in dom(L_m)$), then $(p,p)$ is in the unit of $\B$
and $(\B,\iota,(p,p))\models R_m(\sigma_1)(=L_m(\sigma_2))$. Suppose
that $d_n(e)=k$, for some $k\geq 1$.
\begin{description}
\item[Case 1] Suppose that $(b,a)\in E_n$ and $d_n(b,a)=k-1$. We define $E_w$ as follows.
\begin{eqnarray*}
    E_w:&=&\{(a,w),(w,b)\}\\
    && \cup\{(w,a):\sigma_1\in dom(f_m)\text{ or }S\in H\}\\
    && \cup\{(b,w):\sigma_2\in dom(f_m)\text{ or }S\in H\}\\
    && \cup\{(w,w):\sigma_1\in dom(L_m)\text{ or }R\in H\}.
\end{eqnarray*}
Suppose that $d_n(a,a)=k$ and $d_n(b,b)=k-1$. Define, $$l_w(a,w)=\sigma_1,\text{ }l_w(w,b)=\sigma_2,$$
\begin{eqnarray*}
l_w(w,a)&=&\begin{cases}
      D_{k-1}^{\B,\iota}(p,r) & \sigma_1\in dom(f_m)\text{ or }S\in H \\
      \text{not defined} & otherwise,
   \end{cases}\\
l_w(b,w)&=&\begin{cases}
      f_m(\sigma_2) & \sigma_2\in dom(f_m)\\
      D_{0}^{\B,\iota}(s,p) & \sigma_2\not\in dom(f_m)\text{ and }S\in H \\
      \text{not defined} & otherwise,
   \end{cases} and \\
   l_w(w,w)&=&\begin{cases}
      L_m(\sigma_1) & \sigma_1\in dom(L_m)\\
      D_{0}^{\B,\iota}(p,p) & \sigma_1\not\in dom(L_m)\text{ and }R\in H \\
      \text{not defined} & otherwise.
   \end{cases}
\end{eqnarray*}
For the other case when $d_n(a,a)=k-1$ and $d_n(b,b)=k$, we define $E_w$ and $l_w$ in a symmetric way to above. For the depths, define $d_w(w,w)=k-2$,if $k\geq 2$, and $d_w(w,w)=0$ otherwise. For every edge $e\in E_w$, $d_w(e)$ is defined to be the degree of the normal form $l_w(e)$.
\end{description}

\begin{figure}[!h]
\centering
\begin{tikzpicture}
\tikzset{edge/.style = {->,> = latex'}}
\node (a) at  (0,0) {$a$};
\node (f) at  (-0.3,-1) {$k$};
\node (b) at  (5,0) {$b$};
\node (g) at  (5.3,-1) {$k-1$};
\node (p) at (2.5,4) {$w$};
\node (n) at (2.5,5) {$k-2$};
\node (s) at (1.25,3) {$k-1$};
\node (k) at (3.75,3) {$k-1$};
\node (o) at (1.9,2) {$k-1$};
\node (r) at (3.1,2) {$k-2$};
\draw[edge] (a) to[bend left] (p);
\draw[edge] (2.45,3.8) to[bend left] (0.155,0.15);
\draw[edge] (4.855,0.15) to[bend left] (2.55,3.8);
\draw[edge] (p) to[bend left] (b);
\Loop[dist=1cm,dir=NO,labelstyle=above](p);
\draw[edge] (a)  to[bend left] (b);
\draw[edge] (b) to[bend left] (a);
\node[shape=circle] (h) at  (2.5,0.5) {$k$};
\node[shape=circle] (d) at  (2.5,-0.5) {$k-1$};
\Loop[dist=1cm,dir=SOEA,labelstyle=below right](b);
\Loop[dist=1cm,dir=SOWE,labelstyle=below left](a);
\end{tikzpicture}
\label{fig} \caption{Case 1}
\end{figure}

\begin{description}
\item[Case 2] Suppose that $(b,a)\not\in E_n$, or, $(b,a)\in E_n$ and $d_n(b,a)\geq k$. We have one of the following cases.
\begin{enumerate}[(i)]
\item Suppose that $d_n(a,a)\not=k+1$ and $d_n(b,b)\not=k+1$. Let
\begin{eqnarray*}
    E_w:&=&\{(a,w),(w,b)\}\\
    && \cup\{(w,a):\sigma_1\in dom(f_m)\text{ or }S\in H\}\\
    && \cup\{(b,w):\sigma_2\in dom(f_m)\text{ or }S\in H\}\\
    && \cup\{(w,w):\sigma_1\in dom(L_m)\text{ or }R\in H\}.
\end{eqnarray*}
We define $l_w$ as follows: $l_w(a,w)=\sigma_1,
l_w(w,b)=\sigma_2$,
\begin{eqnarray*}
l_w(w,a)&=&D_{k-1}^{\B,\iota}(p,r)\text{ if and only if }\sigma_1\in dom(f_m)\text{ or }S\in H,\\
l_w(b,w)&=&D_{k-1}^{\B,\iota}(s,p)\text{ if and only if }\sigma_2\in dom(f_m)\text{ or }S\in H, and,\\
l_w(w,w)&=&\begin{cases}
      L_m(\sigma_1) & \sigma_1\in dom(L_m)\\
      D_{0}^{\B,\iota}(p,p) & \sigma_1\not\in dom(L_m)\text{ and }R\in H \\
      \text{not defined} & otherwise.
   \end{cases}
\end{eqnarray*}
Define $d_w(w,w)=k-2$, if $k\geq 2$, and $d_w(w,w)=0$ otherwise. For every edge $e\in E_w$, define $d_w(e)$ is defined to be the degree of the normal form $l_w(e)$.
\item Suppose that $d_n(a,a)=k+1$, then $d_n(b,b)\not=k+1$. Let
\begin{eqnarray*}
    E_w:&=&\{(a,w),(w,b)\}\\
    && \cup\{(w,a):D_{k}^{\B,\iota}(p,r)\in dom(f_m)\}\\
    && \cup\{(b,w):\sigma_2\in dom(f_m)\text{ or }S\in H\}\\
    && \cup\{(w,w):D_{k}^{\B,\iota}(p,r)\in dom(L_m)\text{ or }R\in H\}.
\end{eqnarray*}
Define, $l_w(a,w)=D_{k}^{\B,\iota}(r,p)$, $l_w(w,b)=\sigma_2$,
\begin{eqnarray*}
l_w(w,a)&=&D_{k}^{\B,\iota}(p,r)\text{ if and only if }D_{k}^{\B,\iota}(r,p)\in dom(f_m)\\
l_w(b,w)&=&D_{k-1}^{\B,\iota}(s,p)\text{ if and only if }\sigma_2\in dom(f_m)\text{ or }S\in H, and,\\
l_w(w,w)&=&L_m(D_{k}^{\B,\iota}(r,p))\text{ if and only if }D_{k}^{\B,\iota}(r,p)\in dom(L_m).
\end{eqnarray*}
Similarly, for the case $d_n(b,b)=k+1$, we define $E_w$ and $l_w$ in the same spirt of the above item. Define $d_w(w,w)=k-1$ and, for every edge $e\in E_w$, $d_w(e)$ is defined as expected.
\end{enumerate}
\end{description}
Define the graph $G_{n+1}:=(V_{n+1},E_{n+1})$, the labeling $l_{n+1}$ and the depth $d_{n+1}$ as follows.
\begin{eqnarray*}
V_{n+1}=V_n\cup\bigcup\{Rng(g_e):e\in W_n\}, \text{ } \text{ } \text{ } E_{n+1}=E_{n}\cup\bigcup\{E_w:w\in V_{n+1}\setminus V_n\},\\
l_{n+1}=l_{n}\cup\bigcup\{l_w:w\in V_{n+1}\setminus V_n\}, \text{ } \text{ } \text{ } d_{n+1}=d_{n}\cup\bigcup\{d_w:w\in V_{n+1}\setminus V_n\}.
\end{eqnarray*}

Note that, the special choices of the labels and the depths of the new edges guarantee that $G_{n+1}$ is subjected to the consistency conditions listed above. In fact, we used the assumption that $G_n$ satisfies these conditions to get some algebras which satisfy some pieces of $G_n$ then we used these algebras to label the extra edges added to these pieces to get $G_{n+1}$ satisfying the required conditions. We continue building the graph $G^{\c}$ following the same argument by considering the information given by $sub_m$ of the labels of the edges. So $G^{\c}$ is constructed, basically, by knitting particular pieces of some members of $RRA_H$ and by assigning compatible depths caring remark \ref{fayroz}. This is what we targeted by the consistency conditions. We note that these choices of the depths are not the only possible choices, but with these choices we could reach our aim. Also, one may notice that the depths of the loops are the keys we use to follow remark \ref{fayroz}. We note that the resulting graph might be infinite independently from the choices of the depths.

Let $G^{\c}=(V^{\c},E^{\c})$, where $V^{\c}:=\bigcup\{V_n:n\in\omega\}$ and $E^{\c}:=\bigcup\{E_n:n\in\omega\}$. We still need the depths and the labels, let $d^{\c}=\bigcup\{d_n:n\in\omega\}$ and $l^{\c}=\bigcup\{l_n:n\in\omega\}$. Now, define $\a{G}^{\c}$ as the full $RRA_H$ algebra with unit $E^{\c}$,
$$\a{G}^{\c}=\langle\mathcal{P}(E^{\c}),\cap,\cup,\setminus,\emptyset,E^{\c},;^{[E^{\c}]},\Breve{\text{ }}^{[E^{\c}]}\rangle.$$
By the consistency conditions \ref{c0}, every $E_n$ is an
$H$-relation on $V_n$. Consequently, $\a{G}^{\c}\in RRA_H$ as
desired. Define an evaluation of the free variables
$x_0,\ldots,x_{m-1}$ as follows. For every $i<m$, define
$\iota^{\c}(x_i)=\{e\in E^{\c}:x_i\in color_m(l^{\c}(e))\}$. Now, we
need to prove the following proposition.
\paragraph{Step 2:} In
the next proposition, we prove that every edge in $E^{\tau}$
satisfies its label in $(\G,\iota^{\c})$. Therefore, in particular,
The unique edge $(u,v)$ satisfies $\tau$ in $(\G,\iota^{\c})$.
\begin{prop}\label{satisfy}For every edge $e\in E^{\c}$, $(\G,\iota^{\c},e)\models l^{\c}(e)$.
\end{prop}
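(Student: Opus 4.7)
The plan is to argue by induction on the degree $k$ of the label $l^{\c}(e)\in F_k^m$. The base case $k=0$ is essentially by construction: writing $l^{\c}(e)=D_m^{\beta}$, membership of $e$ in $\iota^{\c}(x_i)$ holds exactly when $x_i\in color_m(l^{\c}(e))$, i.e., when $\beta(x_i)=1$, while consistency condition (\ref{c1}) identifies $e$ with a loop precisely when $\beta(1')=1$. Since $D_m^{\beta}$ is just the Boolean conjunction enforcing these sign choices, it follows that $e\in [D_m^{\beta}]^{\G}_{\iota^{\c}}$.

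For the inductive step I write $l^{\c}(e)=D_m^{\beta}\cdot(CF_k^m)^{\alpha}$ with $e=(r,s)$. The Boolean conjunct $D_m^{\beta}$ is handled exactly as in the base case. For the modal conjunct I would appeal to Remark \ref{fayroz}: the form of degree $k+1$ that $e$ satisfies is pinned down by the degree-$k$ forms satisfied by the neighbors of $e$. Concretely, for each $\sigma\in F_k^m$ and each $(\sigma_1,\sigma_2)\in F_k^m\times F_k^m$ I need to verify that $e\in[\breve{\sigma}]^{\G}_{\iota^{\c}}$ iff $\alpha(\breve{\sigma})=1$, and $e\in[\sigma_1;\sigma_2]^{\G}_{\iota^{\c}}$ iff $\alpha(\sigma_1;\sigma_2)=1$.

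For the positive direction of the composition check, when $(\sigma_1,\sigma_2)\in sub_m(l^{\c}(e))$ and both forms are black, the construction in Step 1 explicitly added a witness node $w$ (via the function $g_e$) with $l^{\c}(r,w)=\sigma_1$ and $l^{\c}(w,s)=\sigma_2$; the inductive hypothesis then supplies the required membership. When one of $\sigma_1,\sigma_2$ is white, the witness is instead a loop $(r,r)$ or $(s,s)$ whose label was installed using $L_m$, $R_m$, or $f_m$ in Case 1 or Case 2, and the same inductive hypothesis applies. The converse check for $\breve{\sigma}$ is similar, leveraging the definition of $f_m$ together with the consistency conditions (\ref{c0})--(\ref{c2}) that control the label on the edge $(s,r)$.

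For the negative direction --- that no spurious witness exists when $\alpha(\sigma_1;\sigma_2)=-1$ --- I would invoke Theorem \ref{andreka}, items (\ref{and1}) and (\ref{and2}): the degree-$k$ normal forms partition the unit, so every candidate midpoint edge satisfies exactly one such form, which by induction must be the one predicted by its own label and hence consistent with $\alpha$. The main obstacle will be reconciling labels of different degrees on neighboring edges: by conditions (\ref{c2})--(\ref{c4}), the label of a candidate midpoint may lie in $F_{k-1}^m$, $F_k^m$, or $F_{k+1}^m$, and translating between these via Theorem \ref{andreka}(\ref{and3}) while preserving the match with $\alpha$ forces a careful use of the depth bookkeeping encoded in conditions (\ref{c5})--(\ref{c8}), which were put in place precisely to guarantee that every adjustment made to the depths along the construction is compatible with the normal-form data carried by the labels.
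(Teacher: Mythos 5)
Your overall shape --- relate the degree-$(k+1)$ information on an edge to the degree-$k$ forms satisfied by its neighbours, use the construction's witnesses for the positive direction and Theorem \ref{andreka}, \ref{and2} together with the consistency conditions for the negative direction --- is the same as the paper's. But the induction you set up does not close, and the point at which it fails is exactly the one you defer to ``careful use of the depth bookkeeping''. You induct on the degree of the label of the edge under consideration; yet to verify, say, $e\in[\sigma_1;\sigma_2]^{\G}_{\iota^{\c}}$ for $\sigma_1,\sigma_2\in F_k^m$ when $l^{\c}(e)$ has degree $k+1$, you must know that a midpoint edge $(e_0,w)$ satisfies the \emph{degree-$k$} form $\sigma_1$. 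By conditions (\ref{c2})--(\ref{c4}) that midpoint may carry a label of degree $k$, $k+1$ or $k+2$; in the latter two cases your inductive hypothesis says nothing about it, and for two adjacent edges whose labels both have degree $k+1$ the dependence is mutual, so no ordering of the edges by label degree can break the circle.

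The paper resolves this with a device your sketch is missing: for each edge $e$ and each level $k\leq q$ it defines $tag_k^{\c}(e)$ to be the unique form in $F_k^m$ lying above $l^{\c}(e)$ in $\f{m}{RRA_H}$ when $k\leq d^{\c}(e)$, and to be $l^{\c}(e)$ itself otherwise, and then proves by induction on the level $k$, \emph{simultaneously for all edges}, that $(\G,\iota^{\c},e)\models tag_k^{\c}(e)$. At level $k+1$ the hypothesis then supplies the degree-$k$ fact needed about every neighbour, whatever the degree of that neighbour's own label; Theorem \ref{andreka}, \ref{and2}, together with conditions (\ref{c7}) and (\ref{c8}), is what lets one pass between the degree-$(j-1)$ decompositions recorded in $sub_m(l^{\c}(e))$ (where $j=d^{\c}(e)$) and the degree-$k$ claim actually being proved. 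Until you reformulate the statement in this stratified way (or find an equivalent substitute), the inductive step as you describe it is not available; the rest of your outline (base case via condition (\ref{c1}) and the choice of $\iota^{\c}$, witnesses from $g_e$ and from the loops labelled via $f_m$, $L_m$, $R_m$, and Remark \ref{fayroz} as the guiding principle) does match the paper.
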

\begin{proof}
We make use of the consistency conditions (\ref{c1})-(\ref{c8}). Let $e=(e_0,e_1)\in E^{\c}$. Then, by condition (\ref{c8}), we have $\f{m}{RRA_H}\models l^{\c}(e)\not=0$. For every $0\leq k\leq d^{\c}(e)$, let $tag_k^{\c}(e)$ be the unique term in $F_k^m$ such that $\f{m}{RRA_H}\models l^{\c}(e)\leq tag_k^{\c}(e)$. For every $d^{\c}(e)\leq k\leq q$, let $tag_k^{\c}(e):=l^{\c}(e)$. It suffices to prove that
\begin{equation}
(\forall e\in E^{\c}) \text{ } (\forall 0\leq k\leq q) \text{ } \text{ } \text{ } (\G,\iota^{\c},e)\models tag_k^{\c}(e). \tag{*}
\end{equation}
For this, we use induction on $k$. Condition (\ref{c1}) and
the special choice of $\iota^{\c}$ ensure that
$(\G,\iota^{\c},e)\models tag_0^{\c}(e)$, for every $e\in E^{\c}$.
Suppose that, for some $0\leq k\leq q-1$, $(\G,\iota^{\c},e)\models
tag^{\c}_k(e)$ for every $e\in E^{\c}$. Let $(e_0,e_1)\in E^{\c}$,
if $k\geq d^{\c}(e)$ then $(\G,\iota^{\c},e)\models
tag_{k+1}^{\c}(e)=tag_k^{\c}(e)=l^{\c}(e)$. So, we can assume that
$k<d^{\c}(e)$ (then $j:=d^{\c}(e)\geq 1$). Let
$\alpha\in{^{D_m}\{-1,1\}}$, $\beta\in{^{CF_k^m}\{-1,1\}}$ and
$\beta^{'}\in{^{CF_{j-1}^m}\{-1,1\}}$ be such that
$$tag_{k+1}^{\c}(e):=D_m^{\alpha}\cdot (CF_{k}^m)^{\beta}\text{ and
}\sigma:=D_m^{\alpha}\cdot (CF_{j-1}^m)^{\beta^{'}}.$$

By condition (\ref{c1}) and the special choice of the evaluation $\iota^{\c}$, we have \begin{equation}\label{0}
(\G,\iota^{\c},e)\models D_m^{\alpha}.
\end{equation}
Let $\gamma\in F_k^m$. Suppose that $\beta(\breve{\gamma})=1$. Then there exists $\gamma_1\in F_{j-1}^m$ such that $\f{m}{RRA_H}\models\gamma_1\leq\gamma$ and $\beta^{'}(\breve{\gamma}_1)=1$, i.e., $\f{m}{RRA_H}\models\sigma\cdot\breve{\gamma}_1\not=0$. By conditions (\ref{c8}) and (\ref{c2}), there exists $l\geq j-1$, $\gamma_2\in F_{l}^m$ and an edge such that $(e_1,e_0)\in E^{\c}$, $l^{\c}(e_1,e_0)=\gamma_2$ and $\f{m}{RRA_H}\models\sigma\cdot\breve{\gamma}_2\not=0$. Let $\gamma_3$ be the unique term in $F_{j-1}^m$ such that $\f{m}{RRA_H}\models\gamma_2\leq\gamma_3$. Hence, $\f{m}{RRA_H}\models\sigma\cdot\breve{\gamma}_3\not=0$ and, consequently, $\beta^{'}(\breve{\gamma}_3)=1$. Recall that $\beta^{'}(\breve{\gamma}_1)=1$, then $\f{m}{RRA_H}\models\sigma\cdot(\gamma_3\cdot\gamma_1)\breve{ }=\sigma\cdot\breve{\gamma}_3\cdot\breve{\gamma}_1\not=0$. By theorem \ref{andreka}, \ref{and2}, this happens only if $\gamma_1=\gamma_3$. Hence, $\f{m}{RRA_H}\models\gamma_2\leq\gamma_1$. By induction hypothesis, we have $(\G,\iota^{\c},(e_1,e_0))\models tag_k^{\c}((e_1,e_0))=\gamma$. Therefore, $(\G,\iota^{\c},e)\models\breve{\gamma}$. Conversely, suppose that $\beta(\breve{\gamma})=-1$. Assume toward a contradiction that there exist $l\geq j-1$, $\gamma_1\in F_{l}^m$ such that $(e_1,e_0)\in E^{\c}$, $l^{\c}(e_1,e_0)=\gamma_1$ and $(\G,\iota^{\c},(e_1,e_0))\models\gamma$. By the induction hypothesis and theorem \ref{andreka}, \ref{and2}, we should have $tag_k^{\c}((e_1,e_0))=\gamma$. By condition (\ref{c8}), $\f{m}{RRA_H}\models\sigma\cdot\breve{\gamma}_1\not=0$. Hence, $\f{m}{RRA_H}\models tag_{k+1}^{\c}(e)\cdot\breve{\gamma}\not=0$, which makes a contradiction. Therefore,
\begin{equation}\label{1}
(\G,\iota^{\c},e)\models\breve{\gamma}\iff\beta(\breve{\gamma})=1.
\end{equation}
Let $\lambda\in F_k^m$ be a white term and $\gamma\in F_k^m$ be a black term. Suppose that $\beta(\lambda;\gamma)=1$. Then there exist a white $\lambda_1\in F_{j-1}^m$ and a black $\gamma_1\in F_{j-1}^m$ such that $\f{m}{RRA_H}\models\lambda_1\leq\lambda$, $\f{m}{RRA_H}\models\gamma_1\leq\gamma$ and $\beta^{'}(\lambda_1;\gamma_1)=1$. Hence, $\f{m}{RRA_H}\models\sigma\cdot(\lambda_1;\gamma_1)\not=0$. But $\f{m}{RRA_H}\models\lambda_1;\gamma_1\leq\gamma_1$, then $\f{m}{RRA_H}\models\sigma\leq\gamma_1$ and $\f{m}{RRA_H}\models\sigma\cdot(\lambda_1;\sigma)\not=0$. By conditions (\ref{c8}) and (\ref{c3}) there exist $l\geq j-1\geq k$ and $\lambda_2\in F^m_l$ such that $(e_0,e_0)\in E^{\c}$, $l^{\c}(e_0,e_0)=\lambda_2$ and $\f{m}{RRA_H}\models\lambda_2;\sigma\not=0$. Let $\lambda_3$ be the unique term in $F_{j-1}^m$ such that $\f{m}{RRA_H}\models\lambda_2\leq\lambda_3$. Hence, $\f{m}{RRA_H}\models\lambda_3;\gamma_1\not=0$ and $\beta^{'}(\lambda_3;\gamma_1)=1$. Recall that $\beta^{'}(\lambda_1;\gamma_1)=1$, then $\f{m}{RRA_H}\models(\lambda_1;\gamma_1)\cdot(\lambda_3;\gamma_1)=(\lambda_1\cdot\lambda_3)\cdot\gamma_1\not=0$.
By theorem \ref{andreka}, \ref{and2}, this means that $\lambda_1=\lambda_3$ and $\f{m}{RRA_H}\models\lambda_2\leq\lambda_1$. By induction, we have $(\G,\iota^{\c},((e_0,e_0)))\models tag_k^{\c}((e_0,e_0))=\lambda$ and $(\G,\iota^{\c},e)\models\gamma$. Therefore, $(\G,\iota^{\c},e)\models\lambda;\gamma$. Conversely, suppose that $\beta(\lambda;\gamma)=-1$ and $(\G,\iota^{\c},e)\models\gamma$. Assume toward a contradiction that there exist $l\geq j-1$, $\lambda_1\in F_l^m$ such that $(e_0,e_0)\in E^{\c}$, $l^{\c}(e_0,e_0)=\lambda_1$ and $(\G,\iota^{\c},(e_0,e_0))\models\lambda$. By the induction hypothesis and theorem \ref{andreka}, \ref{and2}, we should have $tag_k^{\c}((e_0,e_0))=\lambda$. By condition (\ref{c8}), $\f{m}{RRA_H}\models\sigma\cdot(\lambda_1;\sigma)\not=0$. Hence, $\f{m}{RRA_H}\models tag_{k+1}^{\c}(e)\cdot(\lambda;\gamma)\not=0$, which makes a contradiction. Therefore,
\begin{equation}\label{2}
(\G,\iota^{\c},e)\models\lambda;\gamma\iff\beta(\lambda;\gamma)=1.
\end{equation}
Similarly, \begin{equation}\label{3}
(\G,\iota^{\c},e)\models\gamma;\lambda\iff\beta(\gamma;\lambda)=1.
\end{equation}
Let $\gamma_1,\gamma_2\in F_k^m$ be black terms. Suppose that $\beta(\gamma_1;\gamma_2)=1$, then there exists $\gamma_1^{'},\gamma_2^{'}\in F_{j-1}^m$ such that $\beta^{'}(\gamma_1^{'};\gamma_2^{'})=1$, $\f{m}{RRA_H}\models\gamma_1^{'}\leq\gamma_1$ and $\f{m}{RRA_H}\models\gamma_2^{'}\leq\gamma_2$. By the construction and condition (\ref{c4}), there exist a node $w\in g_e$, $j_1,j_2\geq l-1$ and $\gamma_1^{''}\in F_{j_1}^m$ and $\gamma_2^{''}\in F_{j_2}^m$ such that $\f{m}{RRA_H}\models\gamma_1^{''}\leq\gamma_1^{'}$, $\f{m}{RRA_H}\models\gamma_2^{''}\leq\gamma_2^{'}$, both $(e_0,w),(w,e_1)$ are edges in $E^{\c}_{n+1}$, $l^{\c}(e_0,w)=\gamma_1^{''}$ and $l^{\c}(w,e_1)=\gamma_2^{''}$. By induction, we have $(\G,\iota^{\c},(e_0,w))\models tag_k^{\c}((e_0,w))=\gamma_1$ and $(\G,\iota^{\c},(w,e_1))\models tag_k^{\c}((w,e_1))=\gamma_2$. Therefore, $(\G,\iota^{\c},e)\models\gamma_1;\gamma_2$. Conversely, Suppose that $\beta(\gamma_1;\gamma_2)=-1$ and assume toward a contradiction that there exists a node $w\in V^{\c}$, $j_1,j_2\geq l-1$ and $\gamma_1^{'}\in F_{j_1}^m$ and $\gamma_2^{'}\in F_{j_2}^m$ such that both $(e_0,w),(w,e_1)\in E^{\c}$, $l^{\c}(e_0,w)=\gamma_1^{'}$, $l^{\c}(w,e_1)=\gamma_2^{'}$, $(\G,\iota^{\c},(e_0,w))\models\gamma_1$ and $(\G,\iota^{\c},(w,e_1))\models\gamma_2$. By a similar argument, using condition (\ref{c7}), one can show a contradiction. Therefore,
\begin{equation}\label{4}
(\G,\iota^{c},e)\models\gamma_1;\gamma_2\iff\beta(\gamma_1;\gamma_2)=1.
\end{equation}
By (\ref{0}), (\ref{1}), (\ref{2}), (\ref{3}) and (\ref{4}), it follows that $(\G,\iota^{\c},e)\models tag_{k+1}^{\c}(e)$, as desired.
\end{proof}

The algebra $\a{G}^{\c}$ and the evaluation $\iota^{\c}$ have been
constructed and were shown to witness satisfiability of $\tau$. Now,
we carry on with our plan and move to the next step.

\paragraph{Step 3:} We need to find a sequence (zigzag!) of
special edges.
\begin{definition}An edge $(a,b)\in E^{\c}$ is said to be
\begin{enumerate}[-]
\item[-] a useful edge if $(b,a)\in E^{\c}$, $d^{\c}(b,a)<d(a,b)$, and either $(a,b)$ is the unique edge in $E^{\c}$ whose depth is $q$ and $a\not=b$, or, there exists a unique node $w\in V^{\c}$ such that $\{(a,w),(w,b),(w,a),(b,w)\}\subseteq E^{\c}$ and $d^{\c}(a,w)$, $d^{\c}(w,b)$, $d^{\c}(w,a)$, $d^{\c}(b,w)$ are all greater than or equal to $d^{\c}(a,b)$.
\item[-] a side edge if $d^{\c}(a,b)=0$, $(b,a)\in E^{\c}$ if and only if $S\in H$, $\{(a,a),(b,b)\}\subseteq E^{\c}$ if and only if $R\in H$ and $d(a,a)=d(b,b)=0$.
\end{enumerate}
\end{definition}
By the above definition, the unique edge $(u_q,v_q):=(u,v)\in E_0$ with label $\tau$ and depth $q$ is useful edge. Recall that $\f{m}{RRA_H}\models\tau\leq t$. Therefore, there exists a node $w_q\in V^{\c}$ such that $\{(u_q,w_q),(w_q,u_q),(w_q,v_q),(v_q,w_q)\}\subseteq E_1$. By the construction (Case 1), there exists an
edge, $e_{q-1}=(u_{q-1},v_{q-1})\in\{(u_q,w_q),(w_q,u_q)\}\subseteq E_1$ that is useful and $d^{\c}(e_{q-1})=q-1$. Continuing in the same manner, we get a sequence of useful edges $e_q=(u_q,v_q),\ldots,e_1=(u_1,v_1)\in E^{\c}$, a side edge $e_0=(u_0,v_0)$ and a sequence of nodes $w_q,\ldots,w_1\in V^{\c}$ such that
\begin{enumerate}[(a)]
\item $l^{\c}(e_q)=\tau$ and $d^{\c}(e_j)=j$ for every $0\leq j\leq q$.
\item $e_k\in\{(u_{k+1},w_{k+1}),(w_{k+1},v_{k+1})\}$ for every $0\leq k<q$.
\item\label{a7mos} For every $0<k\leq q$ and every node $y\in V^{\c}\setminus\{w_{k}\}$, if $\f{m}{RRA_H}\models l^{\c}(u_{k},y)\cdot l^{\c}(u_{k},w_{k})\not=0$ and $\f{m}{RRA_H}\models l^{\c}(y,v_{k})\cdot l^{\c}(w_{k},v_{k})\not=0$ then $z$ is the unique node for the useful edge $e_k$ mentioned in the above definition.
\end{enumerate}

\paragraph{Step 4:} We note that the selection of the sequence
$e_q,\ldots,e_0$ is not unique. The idea of showing that $\tau$ is
not an atom in $\f{m}{RRA_H}$ is as follows. We extend $G^{\c}$ to
$G^{\c}_{+}$ as follows. Recall that we have either $m\not=0$ or
$H\not=\{R,S\}$.
\begin{description}
\item[Suppose that $m\geq 1$] Pick a brand new node $h$ and let $V_{+}^{\c}=V^{\c}\cup\{h\}$.
Define $E_{+}^{\c}$ as follows.
\begin{eqnarray*}
E_{+}^{\c}&=&E^{\c}\cup\{(u_0,h),(h,v_0)\}\cup\{(h,h):R\in H\}\\
&& \cup\{(h,u_0):S\in H\}\cup\{(v_0,h):S\in H\}
\end{eqnarray*}
We extend the labels as follows. For every $e\in E^{\c}$, let $l_{+}^{\c}(e)=l^{\c}(e)$. If there is
no $z\in V^{\c}$ with $\{(u_0,z),(z,v_0)\}\subseteq E^{\c}$ (in
other words, if $S\not\in H$), define
$$l^{\c}_{+}(u_0,h)=l_{+}^{\c}(h,u_1)=0^{'}\cdot\prod_{i\in
m}-x_i.$$ If there is $z\in V^{\c}$ with
$\{(u_0,z),(z,v_0)\}\subseteq E^{\c}$, then $z$ is unique because $(u_0,v_0)$ is a side edge of $G^{\c}$. Since $m\geq 1$, then there exists $\gamma_1,\gamma_2\in
F_0^m$ such that $color_m(\gamma_1)\not=color_m(l^{\c}(u_0,z))$ and
$color_m(\gamma_{2})\not=color_m(l^{\c}(z,u_1))$.
Define,
$$l^{\c}_{+}(u_0,h)=\gamma_1\text{ and }
l_{+}^{\c}(h,u_1)=\gamma_2.$$ Define,
 $$l_{+}^{\c}(h,u_0)=0^{'}\cdot\prod_{i\in m}-x_i\text{ if and only if }S\in H,$$
    $$l_{+}^{\c}(u_1,h)=0^{'}\cdot\prod_{i\in m}-x_i\text{ if and only if }S\in H$$ and $$l_{+}^{\c}(h,h)=1^{'}\cdot\prod_{i\in m}-x_i\text{ if and only if }R\in H.$$
 The depths are extended in the natural way, $d_{+}^{\c}(e)=d^{\c}(e)$, for every $e\in dom(d^{\c})$, $d_{+}^{\c}(h,h)=0$ and $d_{+}^{\c}(e)=d$ if and only if $l_{+}^{\c}(e)\in F_d^m$, for every $e\in E_{+}^{\c}\setminus E^{\c}$ and every $d\leq q$.
\item[Suppose $m=0$ and $S\not\in H$] Hence, extend $G^{\c}$ by adding the converse of $e_0$. Let $V^{\c}_{+}:=V^{\c}$ and $E_{+}^{\c}=E^{\c}\cup\{(v_0,u_0)\}$. Define $l_{+}^{\c}(e):=l^{\c}(e)$, for every $e\in E^{\c}$, and $l_{+}^{\c}(v_0,u_0)=0^{'}\cdot\prod_{i\in m}-x_i$. The depths are given by $d_{+}^{\c}=d^{\c}\cup\{((v_0,u_0),0)\}$.
\item[Suppose $m=0$ and $R\not\in H$] We extend $G^{\c}$ by adding the loop of $(w_1,w_1)$. Let $V^{\c}_{+}:=V^{\c}$ and $E_{+}^{\c}=E^{\c}\cup\{(w_1,w_1)\}$. Define $l_{+}^{\c}(e):=l^{\c}(e)$, for every $e\in E^{\c}$ and $l_{+}^{\c}(w_1,w_1)=1^{'}\cdot\prod_{i\in m}-x_i$.
The depths remain as they were, i.e., $d^{\c}_{+}=d^{\c}$.
\end{description}
Let $\a{G}^{\c}_+=\langle\mathcal{P}(E^{\c}_+),\cap,\cup,\setminus,\emptyset,E^{\c}_+,;^{[E^{\c}_+]},\Breve{\text{ }}^{[E_+^{\c}]}\rangle$. Clearly $\a{G}^{\c}_+\in RRA_H$. Define the evaluation  $\iota^{\c}_{+}:\{x_0,\ldots,x_{m-1}\}\rightarrow\mathcal{P}(E^{\c}_{+})$ as follows. For every $i\in m$, $\iota^{\c}_{+}(x_i)=\{e\in E^{\c}_{+}: x_i\in color_m(l^{\c}_{+}(e))\}$. The same argument used in proposition \ref{satisfy} can be used to prove the following.
\begin{equation}\label{satisfy2}
(\forall e\in E^{\c}_{+}) \text{ } \text{ } \text{ } (\a{G}^{\c}_{+},\iota^{\c}_{+},e)\models l_{+}^{\c}(e).
\end{equation}

\paragraph{Step 5:}
Recall the useful edges $e_q, \dots, e_1$ and the side edge $e_0$. For every $0\leq j\leq q$, let
$son(e_j)$ and $daughter(e_j)$ be the unique terms in $F_{j+1}^m$
such that \begin{center}$(\a{G}^{\c},\iota^{\c},e_j)\models son(e_j)$
and $(\a{G}^{\c}_+,\iota^{\c}_{+},e_j)\models
daughter(e_j)$.
\end{center}
Hence, for every $0\leq j\leq q$, $son(e_j)$ and $daughter(e_j)$ are
non-zero forms and, by lemma \ref{satisfy} and equation
(\ref{satisfy2}) above, each of which is below the label of $e_j$ in
$\f{m}{RRA_H}$. Now, we are ready to show that $\tau$ is not an atom
in $\f{m}{RRA_H}$. We use the conditions in the definition of the
useful edges to show the following.
\begin{prop}
For every $0\leq j\leq q$, $$\f{m}{RRA_H}\models son(e_j)\cdot daughter(e_j)=0.$$
\end{prop}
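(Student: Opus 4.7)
The plan is to prove by induction on $j$ the stronger statement that $son(e_j)\neq daughter(e_j)$; since both are members of $F_{j+1}^m$, Theorem~\ref{andreka}(\ref{and2}) then immediately yields $\f{m}{RRA_H}\models son(e_j)\cdot daughter(e_j)=0$.

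For the base case $j=0$, I would split into the three sub-cases in the construction of $\a{G}^{\c}_+$. If $m\geq 1$, the new node $h$ produces in $\a{G}^{\c}_+$ a path $u_0\to h\to v_0$ whose labels $\gamma_1,\gamma_2\in F_0^m$ were chosen so that $color_m(\gamma_i)$ does not match the label of any old edge that could form a matching decomposition of $e_0$; hence the $\alpha$-value of $son(e_0)$ at $\gamma_1;\gamma_2$ is $-1$ while that of $daughter(e_0)$ is $1$. If $m=0$ and $S\notin H$, the converse edge $(v_0,u_0)$ is present in $\a{G}^{\c}_+$ but absent from $\a{G}^{\c}$, so the $\alpha$-value at $\breve{0'}$ flips from $-1$ to $1$. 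If $m=0$ and $R\notin H$, the added loop $(w_1,w_1)$ coincides with $(u_0,u_0)$ or $(v_0,v_0)$ (since $e_0\in\{(u_1,w_1),(w_1,v_1)\}$), producing a new decomposition of $e_0$ via composition with $1'$ that is absent in $\a{G}^{\c}$.

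For the inductive step, assume $son(e_{j-1})\neq daughter(e_{j-1})$ for $j\geq 1$ and, without loss of generality, $e_{j-1}=(u_j,w_j)$ (the case $e_{j-1}=(w_j,v_j)$ is symmetric with the roles of the factors swapped). Set $\nu:=son(e_{j-1})$ and $\nu_+:=daughter(e_{j-1})$; both lie in $F_j^m$. Let $\mu\in F_j^m$ be the unique $F_j^m$-form above $l^{\c}(w_j,v_j)$; since $d^{\c}(w_j,v_j)\geq j$ by the useful-edge condition, Proposition~\ref{satisfy} (and its analogue~(\ref{satisfy2})) together with Theorem~\ref{andreka}(\ref{and2}) ensure that $\mu$ is the $F_j^m$-form satisfied by $(w_j,v_j)$ in both $\a{G}^{\c}$ and $\a{G}^{\c}_+$. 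In $\a{G}^{\c}$ the path through $w_j$ witnesses $e_j\models\nu;\mu$, so $son(e_j)$ records $\beta(\nu;\mu)=1$. It then suffices to show $(\a{G}^{\c}_+,\iota^{\c}_+,e_j)\not\models\nu;\mu$: for $z=w_j$ the form at $(u_j,w_j)$ in $\a{G}^{\c}_+$ is $\nu_+\neq\nu$, so by Theorem~\ref{andreka}(\ref{and2}) $(u_j,w_j)$ cannot satisfy $\nu$; for a newly added node (only possible when $m\geq 1$, namely $z=h$) the labels were chosen with colors incompatible with $\nu$ or $\mu$; and for an old $z\in V^{\c}\setminus\{w_j\}$ the uniqueness clause~(\ref{a7mos}) in the selection of $e_q,\ldots,e_0$ rules out such an alternative decomposition.

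The main obstacle is this last sub-case of the inductive step: showing that an old $z\neq w_j$ with $(u_j,z)\models\nu$ and $(z,v_j)\models\mu$ in $\a{G}^{\c}_+$ would contradict condition~(\ref{a7mos}). This requires converting satisfaction of the forms $\nu,\mu$ by $(u_j,z)$ and $(z,v_j)$ into the nontriviality of the label-products $l^{\c}(u_j,z)\cdot l^{\c}(u_j,w_j)$ and $l^{\c}(z,v_j)\cdot l^{\c}(w_j,v_j)$ in $\f{m}{RRA_H}$, plus a careful compatibility check that the new edges of $\a{G}^{\c}_+$ cannot accidentally create the decomposition.
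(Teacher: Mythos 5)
Your proposal follows essentially the same route as the paper: induct on $j$, observe that in $\a{G}^{\c}$ the node $w$ from the useful-edge condition witnesses the composition $son;\sigma$ at the next edge while in $\a{G}^{\c}_+$ the induction hypothesis kills the witness at $w$ and condition~(\ref{a7mos}) kills any other witness, so the two degree-$(j+1)$ forms disagree on a $\beta$-value and are disjoint by Theorem~\ref{andreka}(\ref{and2}). The obstacle you flag at the end (converting satisfaction of $\nu,\mu$ at $(u_j,z),(z,v_j)$ into nonzero label-products so that~(\ref{a7mos}) applies) is precisely the step the paper also passes over with a \qut{without loss of generality}, so your argument is at the same level of rigor as the published one, and your base case is in fact more explicit than the paper's.
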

\begin{proof}
By induction on $j$. By the special construction of $G_{+}^{\c}$, it is clear that $\f{m}{RRA_H}\models son(e_0)\cdot daughter(e_0)=0$. Suppose that, for some $j\leq q-1$, $\f{m}{RRA_H}\models son(e_j)\cdot daughter(e_j)=0$. We may assume that $e_j=(u_{j+1},w_{j+1})$. Let $\sigma=D^{\a{G}^{\c},\iota^{\c}}_{j+1}((w_{j+1},v_{j+1}))$. By condition (\ref{a7mos}) and without loss of generality, we may assume that there is NO node $y\in V^{\c}\setminus\{w_{j+1}\}$ with $\{(u_{j+1},y),(y,v_{j+1})\}\subseteq E^{\c}$ and
\begin{eqnarray*}
(\a{G}^{\c},\iota^{\c},(u_{j+1},y))\models son(e_j), &\text{ }&
(\a{G}^{\c},\iota^{\c},(y,v_{j+1}))\models \sigma,\\
(\a{G}^{\c}_+,\iota^{\c}_+,(u_{j+1},y))\models son(e_j)
&\text{ }& (\a{G}^{\c}_+,\iota^{\c}_+,(y,v_{j+1}))\models \sigma.
\end{eqnarray*}
Hence, $$(\a{G}^{\c},\iota^{\c},(u_{j+1},v_{j+1}))\models son(e_j);\sigma\text{ } \text{ but } \text{ }(\a{G}^{\c}_+,\iota^{\c}_+,(u_{j+1},v_{j+1}))\models -(son(e_j);\sigma).$$
Therefore, $\f{m}{RRA_H}\models son(e_{j+1})\cdot daughter(e_{j+1})=0$, as desired.
\end{proof}

In particular, $\f{m}{RRA_H}\models son(e_{q})\cdot
daughter(e_{q})=0$ and, hence, $\tau$ is not an atom in the free
algebra $\f{m}{RRA_H}$. Recall that $\tau$ was an arbitrary normal
form with $\f{m}{RRA_H}\models0\not=\tau\leq t$. Hence, proving that
$\tau$ is not an atom in $\f{m}{RRA_H}$ yields to the following
proposition.
\begin{prop}\label{finish}Suppose that $m\not=0$ or $H\not=\{R,S\}$. There is no atom in the free algebra $\f{m}{RRA_H}$ that is below $t$. Therefore, $\f{m}{RRA_H}$ is not atomic
\end{prop}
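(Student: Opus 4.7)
The plan is to read the proposition directly off Steps 1--5. Those steps already establish, for every satisfiable normal form $\tau\in F^m_q$ (with $q\ge 1$) satisfying $\f{m}{RRA_H}\models 0\ne\tau\le t$, the existence of two elements $son(e_q),daughter(e_q)\in\f{m}{RRA_H}$ that lie below $\tau$, are each nonzero (being realised in $\G$ and $\G_+$ respectively), and satisfy $\f{m}{RRA_H}\models son(e_q)\cdot daughter(e_q)=0$. From this I would deduce that $son(e_q)$ is a nonzero proper subelement of $\tau$: the alternative $son(e_q)=\tau$ would force $daughter(e_q)\le\tau=son(e_q)$, contradicting the disjointness just recorded. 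Hence no such $\tau$ is an atom of $\f{m}{RRA_H}$.

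To turn \qut{no nonzero normal form below $t$ is an atom} into the statement of the proposition, I would argue by contradiction using Theorem~\ref{andreka}. Suppose $a\in\f{m}{RRA_H}$ is an atom with $a\le t$. By item~\ref{and4} of that theorem there exist $k\in\omega$ and a finite $S_a\subseteq F^m_k$ with $\f{m}{RRA_H}\models a=\sum S_a$; iterating item~\ref{and3} finitely often I may arrange $k$ to be as large as I wish, in particular $k\ge 1$. By item~\ref{and2} the summands of $S_a$ are pairwise disjoint in $\f{m}{RRA_H}$, so the atomicity of $a$ forces it to coincide with a unique $\tau\in S_a$. This $\tau$ is then a satisfiable nonzero normal form of degree $k\ge 1$ sitting below $t$, and the previous paragraph provides a nonzero proper subelement of $\tau=a$, contradicting that $a$ is an atom.

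Finally, for the non-atomicity of $\f{m}{RRA_H}$ itself I only need to observe that $t$ is nonzero in $\f{m}{RRA_H}$, which follows from realising $t$ concretely in some member of $RRA_H$ (for instance on a three-node $H$-closed graph whose endpoints are joined by a non-identity symmetric pair and a length-two path); equivalently, the starting algebra $\B$ used at round $0$ of Step~1 already witnesses $t\ne 0$. Since no atom sits below a nonzero element, the algebra cannot be atomic. The only delicate point in this plan is the normal-form bookkeeping in the second paragraph, i.e.\ making sure the degree $k$ coming out of Theorem~\ref{andreka}\ref{and4} is large enough to feed into the construction of $\G$; this is handled by repeated application of Theorem~\ref{andreka}\ref{and3} and is the only step I would verify carefully, the rest of the argument being pure recapitulation of the work already done.
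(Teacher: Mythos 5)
Your proposal is correct and follows essentially the same route as the paper: Steps 1--5 already supply, for each satisfiable normal form $\tau$ with $0\neq\tau\leq t$, the disjoint nonzero elements $son(e_q)$ and $daughter(e_q)$ below $\tau$, and the proposition is obtained by reducing an arbitrary putative atom below $t$ to such a normal form via Theorem~\ref{andreka}. You merely make explicit two points the paper leaves implicit --- the degree-raising to guarantee $q\geq 1$ and the verification that $t\neq 0$ in $\f{m}{RRA_H}$ --- both of which are handled exactly as you describe.
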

The remaining direction of theorem \ref{didit} follows from
proposition \ref{finish}. It would be nice to list all the atoms of
the non atomic $\f{m}{RRA_H}$'s. However, we believe that the free
algebra $\f{m}{RRA_H}$ contains finitely many atoms if and only if
$S\in H$.

\paragraph{Acknowledgment:}
The research to this article was sponsored by Central European
University Foundation, Budapest (CEUBPF). The theses explained
herein are representing the own ideas of the author, but not
necessarily reflect the opinion of CEUBPF.

The research to this article was carried out at University College
London, London (UCL), during a research visit program under the
supervision of professor Robin Hirsch. It is also part of the
author's PhD thesis under the supervision of Professors Hajnal
Andr\'eka and Istv\'an N\'emeti.

The author should like to thank Professors Hajnal Andr\'eka and
Istv\'an N\'emeti for reading all the drafts of the present paper
and for their valuable comments and ideas. The author is thankful
for Professors Robin Hirsch and Szabolcs Mikul\'as for the frequent
meetings to discuss the ideas used in the presented results.

\bibliographystyle{plainnat}
\bibliography{RSL}
\vspace*{10pt}

\end{document}